\numberwithin{equation}{section}
\theoremstyle{plain}
\newtheorem{theorem}{Theorem}[section]
\newtheorem{question}[theorem]{Question} 
\newtheorem{remark}[theorem]{Remark}\newtheorem{example}[theorem]{Example}
\theoremstyle{definition}
\newtheorem{definition}[theorem]{Definition}
\newtheorem{proposition}[theorem]{Proposition}    
\theoremstyle{remark}
\newcommand\C{\mathbb C}      
\newcommand\R{\mathbb R}        
\newcommand\N{\mathbb N}        
\newcommand\Ha{\mathbb H}       
\newcommand\D{\mathbb D}     
\renewcommand\Im{\text{Im}}
\newcommand{\Landauo}{{\scriptstyle\mathcal{O}}}
\newcommand{\supp}{\operatorname{supp}}
\begin{document}
\parindent 0pt 

\setcounter{section}{0}

\title{On infinite-slit limits of multiple SLE}
\date{\today}
\author{
 Sebastian Schlei{\ss}inger  \thanks{Supported by the ERC grant  ``HEVO - Holomorphic Evolution Equations'' no. 277691.}}

\title{The Chordal Loewner Equation and Monotone Probability Theory}
\maketitle

\abstract{In \cite{MR2053711}, O. Bauer interpreted the chordal Loewner equation in terms of non-commutative probability theory. We follow this perspective and identify the chordal Loewner equations as the non-autonomous versions of evolution equations for semigroups in monotone and anti-monotone probability theory. We also look at the corresponding equation for free probability theory.}\\

{\bf Keywords:} chordal Loewner equation, evolution families, non-commutative probability, free probability, monotone probability, anti-monotone probability, quantum processes\\

\tableofcontents
 
\parindent 0pt

\section{Introduction}

Denote by $\Ha=\{z\in\C \,|\, \Im(z)>0\}$ the upper half-plane. 
Let $(\nu_t)_{t\geq0}$ be a family of probability measures on $\R.$ The chordal (ordinary) Loewner equations are given by

\begin{equation}\label{eq1}
\frac{\partial}{\partial t}g_t  = \int_\R \frac1{g_t-u} \,\nu_t(du) \quad \text{for almost every $t\in[0, \infty)$}, \quad g_0(z)=z\in\Ha,\\
\end{equation}
\begin{equation}\label{eq2}
\frac{\partial}{\partial t}\varphi_t  = \int_\R \frac1{u-\varphi_t} \,\nu_t(du)  \quad \text{for almost every $t\in[0, \infty)$}, \quad \varphi_0(z)=z\in\Ha.
\end{equation}

In the first case, the mappings $z\mapsto g_t(z)$ are conformal mappings from $\Ha\setminus K_t$ onto $\Ha,$ where $(K_t)_{t\geq0}$ is a family of growing hulls, i.e. $K_t\subset \Ha,$ $\Ha\setminus K_t$ is simply connected and $K_s\subset K_t$ whenever $s\leq t.$ The initial condition implies $K_0=\emptyset.$
The second equation is interpreted in a similar way.\\

In this note, we show how these equations can be interpreted in terms of monotone probability theory (equation \eqref{eq2}) and anti-monotone probability theory (equation \eqref{eq1}). \\
These relations are in fact quite simple. In case of the second equation \eqref{eq2}, we have that $1/\varphi_{t}$ is the Cauchy transform of a probability measure $\mu_t$. The process $(\mu_t)_{t\geq0}$, in turn, can be interpreted to describe a ``quantum process'' $(X_t)_{t\geq0}$, which can be seen as a collection of 
self-adjoint linear operators with monotonically independent increments such that the distribution of $X_t$ is given by $(\mu_t)_{t\geq0}.$\\
 
In what follows, we explain this connection in more detail. We also take a look at the corresponding differential equation in free probability theory. 

\section{Non-commutative probability}\label{multipleSLE}

Non-commutative probability theory provides an abstract description of random variables, motivated by the role that observables play in quantum mechanics.\\
In the following, we recall some of the basic notions of free probability theory and monotone probability theory. Both are non-commutative probability theories in which the classical notion of \emph{independent} random variables is replaced by \emph{freely independent/monotonically independent} random variables. We refer to \cite{MR2132092} for an introduction.\\ 

A \emph{non-commutative probability space} $(\mathcal{A}, \varphi)$ consists of a unital algebra $\mathcal{A}$ and a linear functional $\varphi:\mathcal{A}\to\C$ with $\varphi(\mathds{1})=1.$ The elements of $\mathcal{A}$ are called random variables and $\varphi$ should be thought of as an expectation. The distribution of a random variable $a$ is simply defined as the collection of all moments $\varphi(a^k),$ $k\in\N.$\\

Furthermore, $(\mathcal{A}, \varphi)$ is called \emph{$C^*$--probability space} if $\mathcal{A}$ is a $C^*$--algebra and $\varphi$ is a state, i.e. a positive linear functional of norm 1.

\begin{example}\label{matrices} Let $\mathcal{A}$ be the space of all $N\times N$--matrices with the spectral norm and let $\varphi(a)=\frac1{N}\operatorname{Tr}(a).$  Then $(\mathcal{A}, \varphi)$ is a $C^*$--probability space.
\end{example}

\begin{example} Let $H$ be a Hilbert space and let $\mathcal{A}$ be the space ${\mathcal B}(H)$ of all bounded linear operators on $H.$ Furthermore, let $\Omega \in H$ be a unit vector and define $\varphi$ by $\varphi(a) = \left< a \Omega, \Omega\right>.$ Then $(\mathcal{A}, \varphi)$ is a $C^*$--probability space.
\end{example}

In the following, we assume that $(\mathcal{A},\varphi)$ is a $C^*$--probability space.\\
If $a\in \mathcal{A}$ is self-adjoint, then the distribution of $a$ as defined above can be identified with a probability measure on $\R$ by using the spectral theorem: There exists a probability measure on $\R$ (supported on the spectrum $\sigma(a)$) such that for every polynomial $p$, the value $\varphi(p(a))$ can be represented by
$$\varphi(p(a)) = \int_{\sigma(a)}p(z) \, \mu(dz).$$

The measure $\mu$ has compact support. However, one can generalize the setting of $C^*$--probability spaces to deal also with unbounded self-adjoint random variables. In Sections \ref{up} and \ref{uq}, we  stick to the setting of $C^*$--probability spaces for the sake of simplicity.

\subsection{Free probability theory}\label{up}

Free probability theory has been introduced by D. Voiculescu in \cite{MR799593}. It is based on a 
non-commutative notion of independence of random variables, the \emph{free independence}.\\

A collection $a_1,a_2,...,a_N\in \mathcal{A}$ of random variables is called  \emph{freely independent} if 
$$\varphi(p_1(a_{j(1)})...p_k(a_{j(k)}))=0$$ for all polynomials $p_1,...,p_k$ such that $j(1)\not=j(2)\not=...\not=j(k)$ and 
$\varphi(p_i(a_{j(i)}))=0$ for all $i=1,...,k.$
To simplify the notation later on, we will call an $N$--tuple $(a_1,a_2,...,a_N)\in \mathcal{A}^N$  \emph{freely independent} if $a_1,...,a_N$ are freely independent.\\

The usefulness of the above definition is due to the following fact: Let $a,b\in\mathcal{A}$ be two freely independent random variables. Then the moments of $a+b$ can be calculated by using the moments of $a$ and $b$ only (\cite[Proposition 4.3]{MR799593}). This leads to the free convolution:\\
Assume that $a, b$ are freely independent and self-adjoint random variables with distributions $\mu$ and $\nu$. The distribution of $a+b$, denoted by $\mu\boxplus\nu$, is called the \emph{free convolution} of $\mu$ and $\nu.$

The free convolution of probability measures is closely related to their Cauchy transforms:\\
First, the Cauchy transform (or Stieltjes transform) is given by $$G_{\mu}(z)=\int_\R\frac{1}{z-x}\,\mu(dx), \qquad z\in\C\setminus \R.$$
 (Note that the measure $\mu$ can be recovered from $G_{\mu}$ by the Stieltjes-Perron inversion formula, see \cite[Theorem F.2]{MR2953553}.) \\
Now we define $V_\mu$ as the right inverse of $G_\mu$, i.e. $V_\mu$ is the solution of $G_\mu(V_\mu(z))=z$, $z\in\Ha,$ with $V_\mu(z)\sim \frac1{z}$ for  $z$ near $0.$ (For probability measures with unbounded support,
$V_\mu$ exists as a holomorphic function defined on a Stolz angle near $0$; see \cite[Section 5]{MR1254116}.)\\
Finally, the $R$--transform $R_{\mu}(z)$ of $\mu$ is defined by $R_{\mu}(z)=V_\mu(z)-\frac1{z}.$\\

 For probability measures $\mu$ and $\nu$ on $\R,$ the free convolution $\mu \boxplus \nu$ can be calculated by the formula
$$R_{\mu\boxplus\nu}(z) = R_\mu(z) + R_\nu(z).$$

\begin{example} Free probability theory possesses a non-commutative analogue of the central limit theorem; see \cite{MR799593}. The free analogue of the normal distribution (with mean zero and variance $\sigma^2$) is given by Wigner's semicircle distribution $\mu_{W,\sigma^2}$ given by $$\frac{\sqrt{4\sigma^2-x^2}}{\pi 2\sigma^2} dx, \quad x\in[-2\sigma, 2\sigma].$$
Here, we have $G_{\mu_{W,\sigma^2}}(z) = \frac{2}{z+\sqrt{z^2-4\sigma^2}},$ and $R_{\mu_{W,\sigma^2}}(z)=\sigma^2 z,$ and consequently, the semicircle distribution is freely stable: 
$$\mu_{W,\sigma^2} \boxplus \mu_{W,\tau^2} = \mu_{W,\sigma^2+\tau^2}.$$
\end{example}

\begin{example} Let $(\mu_t)_{t\geq0}$ be a semigroup with respect to free convolution, i.e. $\mu_{t+s}=\mu_t \boxplus \mu_s,$ such that 
$\mu_0=\delta_0.$ Let $\alpha$ be an arbitrary probability measure and define $G_t:=G_{\mu_t\boxplus \alpha}.$ Then $G_t$ satisfies the PDE
\begin{equation}\label{semigroup}
\frac{\partial}{\partial t}G_t(z) = -\frac{\partial}{\partial z}G_t(z)\cdot R(G_t(z)), \quad G_0(z) = G_\alpha, \end{equation}
where $\Im(z)<0$ and $R$ is the $R$-transform $R=R_{\mu_1};$ see \cite[p.74]{MR1228526}. In this case, $R$ has an analytic extension to $\Ha$ and $R:\Ha\to\overline{\Ha}.$\\

For example, take $\mu_{t}=\mu_{W,t}$, which leads to
\begin{equation}\label{heat}
\frac{\partial}{\partial t}G_t(z) = -\frac{\partial}{\partial z}G_t(z)\cdot G_t(z), \quad G_0(z) = G_\alpha,
 \end{equation}
i.e. the ``free analogue of the heat equation is the complex inviscid Burgers equation'' (\cite{Novak_lectures}, p.44), because a realization of the process $\mu_{W,t}$ is called a free Brownian motion, see Section \ref{realization}.
\end{example}

\subsection{Monotone independence}\label{uq}

For a probability measure $\mu$ we define the 
$F$--transform of $\mu$ simply as $F_\mu:\Ha\to\Ha, F_\mu(z) := 1/G_\mu(z).$

\begin{remark}Let $\mu, \nu$ be probability measures. Then there exist holomorphic mappings $\omega_1, \omega_2:\Ha\to\Ha$ such that
$$F_{\mu\boxplus \nu}(z) = F_{\mu}(\omega_1(z)) = F_{\nu}(\omega_2(z)) \qquad \text{for all $z\in \Ha.$} $$
Furthermore, also $\omega_1, \omega_2$ have the form $\omega_1 = F_{\sigma_1}$ and $\omega_2 = F_{\sigma_2}$ for probability measures $\sigma_1, \sigma_2;$ see \cite[Theorem 3.1]{MR1605393}.
\end{remark}

Now one defines the \emph{monotone convolution} $\mu \rhd \nu$ by 
$$F_{\mu \rhd \nu} = F_\mu \circ F_\nu.$$
This convolution is related to another notion of independence of random variables, 
the \emph{monotone independence}, which was introduced by N. Muraki 
(\cite{MR1467953}) and independently by De. Giosa, Lu
(\cite{MR1483010, MR1455615}). \\

Let $a_1,...,a_N\in \mathcal{A}.$ The tuple $(a_1,a_2,...,a_N)$, as an ordered collection of random variables, is called \emph{monotonically independent} if
$$\varphi(a_{i_1}^{p_1}\dots a_{i_k}^{p_k} \dots 
a_{i_m}^{p_m})=\varphi(a_{i_k}^{p_k})\cdot 
\varphi(a_{i_1}^{p_1}\dots a_{i_{k-1}}^{p_{k-1}} a_{i_{k+1}}^{p_{k+1}} \dots 
a_{i_m}^{p_m})$$
whenever $i_{k-1}<i_k>i_{k+1}$ (one of the inequalities is eliminated when $k=1$ or $k=m$); see \cite[Section 2]{hasebe2}.
\begin{remark} In \cite{MR1853184}, Muraki defines monotone independence by the stronger conditions 
\begin{itemize}
\item[(a)] $a_i a_j^p a_k = \varphi(a_j^p) a_i a_k $ whenever $i<j$ and $j>k.$
\item[(b)]\begin{eqnarray*}&&\varphi(a_{i_m}^{p_m}\dots a_{i_2}^{p_2}
a_{i_1}^{p_1}a_{i}^{p}a_{j_1}^{q_1}a_{j_2}^{q_2}\dots 
a_{j_n}^{q_n})\\&=&\varphi(a_{i_m}^{p_m})\dots \varphi(a_{i_2}^{p_2})\varphi(
a_{i_1}^{p_1})\varphi(a_{i}^{p})\varphi(a_{j_1}^{q_1})\varphi(a_{j_2}^{q_2})\dots \varphi(a_{j_n}^{q_n})
\end{eqnarray*} whenever $i_m>...>i_2>i_1>i<j_1<j_2<...<j_n.$
\end{itemize}
\end{remark}

\begin{remark}
Furthermore, there are also several other notions of independence and convolutions. Some interesting relations and decompositions for convolutions are studied in \cite{MR2321046}.\\
Finally, we note that N. Muraki showed in \cite{MR2016316} that there are only five ``nice'', so called natural independences: the tensor,
free, Boolean, monotone and anti-monotone independence; see also \cite[p.198]{MR2213451}.
\end{remark}

Let $a,b \in \mathcal{A}$ be self-adjoint random variables such that $(a,b)$ is monotonically independent. Denote by $\mu$ and $\nu$ the probability measures of $a$ and $b$ respectively. Then we have: the distribution of $a+b$ is exactly the measure $\mu \rhd \nu
$, see \cite[Theorem 4]{MR1853184}.

\begin{example}\label{arcsine} Let $\mu_{A,\sigma^2}$ be the arcsine distribution with mean 0 and variance $\sigma^2,$ i.e. $\frac1{\pi\sqrt{2\sigma^2-x^2}} dx,$ $x\in[-\sqrt{2}\sigma, \sqrt{2}\sigma].$ Then $F_{\mu_{A,\sigma^2}}(z)=\sqrt{z^2-2\sigma^2}$ and we have
$$\mu_{A,\sigma^2} \rhd \mu_{A,\tau^2} = \mu_{A,\sigma^2+\tau^2}.$$
\end{example}

The arcsine distribution plays the role of the Wigner semicircle distribution in free probability;  see \cite[Theorem 2]{MR1853184} for a central limit theorem in monotone probability theory.

\section{Non-autonomous evolution equations}\label{st}

In this section, probability measures are not assumed to have bounded support. We note that the Cauchy, 
F- and R-transform, as well as the free and (anti-)monotone convolutions are also defined for this general case by the same formulas.

\subsection{The chordal Loewner equation}

In \cite{MR1512136}, C. Loewner introduced a differential equation for conformal mappings to attack the so called Bieberbach conjecture: Let $\D\subset\C$ be the unit disc and assume that $f:\D\to\C$ is univalent (=holomorphic and injective) with $f(0)=0$ and $f'(0)=1.$ Let $a_n$ be the coefficients of the power series expansion $f(z)=z + \sum_{n\geq 2}a_nz^n$. Then $$|a_n|\leq n.$$ Loewner could prove this inequality for $n=3$ and the conjecture has been proven completely in 1985 by L. de Branges. Since its introduction, Loewner's approach has been extended and the Loewner differential equations are now an important tool in the theory of conformal mappings. In the following, we describe a special differential equation that goes back to P. Kufarev. We refer to \cite{AbateBracci:2010}
for an historical overview of Loewner theory. \\

The so called chordal Loewner equation can be described as follows:\\
Take a family $\{\nu_t\}_{t\geq 0}$ of probability measures and let $M_t(z)=G_{\nu_t}(z)=\int_\R\frac1{z-x}\,\nu_t(dx)$ be the Cauchy transform of $\nu_t.$ Assume that 
\begin{equation}\label{regularity}
\text{$t \mapsto M_t(z)$ is measurable for every $z\in\Ha$.}
\end{equation}

The chordal Loewner equation is given by the Carath\'{e}odory ODE (``a.e.'' stands for ``almost every'')
\begin{equation}\label{ODE1}\frac{\partial}{\partial t}g_t  = M_t(g_t) \quad \text{for a.e. $t\in[0, \infty)$}, \quad g_0(z)=z\in\Ha,
\end{equation}
and has a unique solution (\cite[Theorem 4]{MR1201130}). \\

For fixed $z\in\Ha$, the solution $t\mapsto g_t(z)$ may have a finite lifetime $T(z)>0$ in the sense that $g_t(z)\in\Ha$ for all $t<T(z)$, but $\lim_{t\nearrow T(z)}\Im(g_t(z))=0.$ \\
If we fix a time $t>0$ and let $K_t=\{z\in\Ha \,|\, T(z)\leq t\},$ then $g_t(z)$  is interpreted as the conformal mapping 
$$\text{$g_t:\Ha\setminus K_t\to \Ha$\quad with the normalization \quad$g_t(z) = z + \frac{t}{z} + \Landauo(1/z)$}$$
 as $z\to \infty$ non-tangentially in $\Ha$. The sets $K_t\subset \Ha$ are growing \emph{hulls}, which means $\Ha\setminus K_t$ is simply connected and $K_s\subset K_t$ whenever $s\leq t.$ As we start with the identity mapping, we have $K_0=\emptyset.$

\begin{figure}[ht]
\rule{0pt}{0pt}
\centering
\includegraphics[width=11cm]{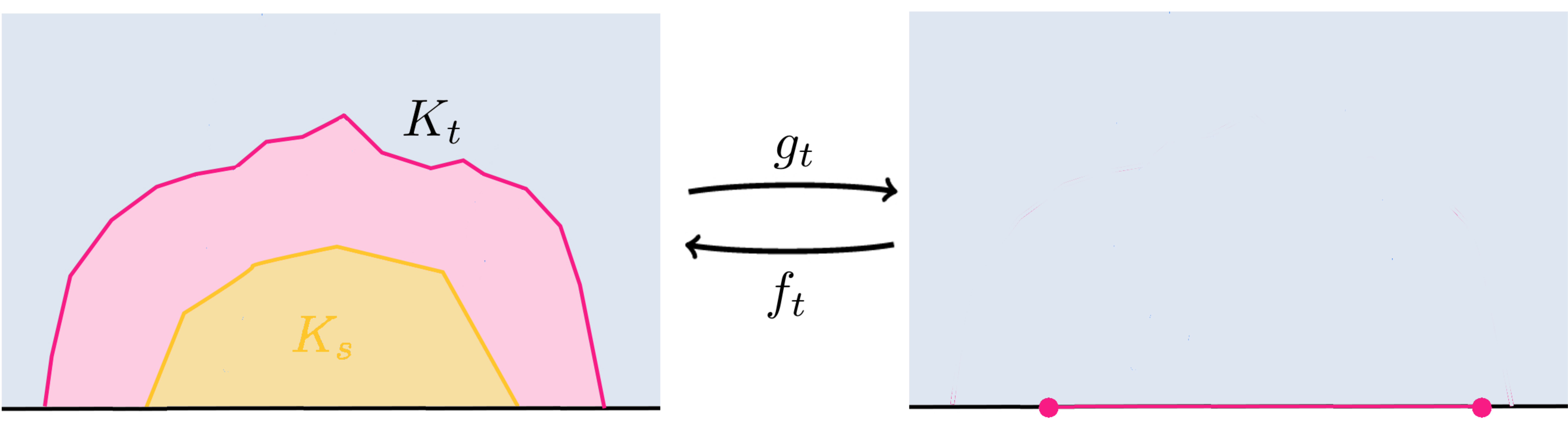}
\caption{The mappings $g_t$ and $f_t$.}
\label{fig1}
\end{figure}

The inverse mappings $f_t=g_t^{-1}$ satisfy the Loewner PDE

\begin{equation}\label{PDE1}\frac{\partial}{\partial t}f_t  = - \frac{\partial}{\partial z}f_t \cdot  M_t(z), \quad f_0(z)=z\in\Ha.
\end{equation}

Now, as noted in \cite{MR2053711}, one can now consider the mapping $G_t:=1/f_t$, which is the Cauchy transform of a measure $\mu_t,$ and so the Loewner equation can be interpreted as a mapping
$$\mathcal{L}: \quad \{\nu_t\}_{t\geq0} \mapsto \{\mu_t\}_{t\geq0}.$$

\begin{example}\label{line_segment} Let $\nu_t=\delta_0$ for all $t\ge0.$ The solution $g_t$ of the Loewner equation 
\begin{equation}\label{LoewnerPDE}\frac{\partial}{\partial t}g_t  = \frac1{g_t}, \quad g_0(z)=z\in\Ha,
\end{equation}
is given by $g_t(z)=\sqrt{z^2+2t}.$ Hence, the hull $K_t$ is a straight line segment connecting $0$ to $\sqrt{2t}i.$\footnote{
  As $t\to\infty$, we obtain a straight line segment $L$ connecting $0$ and $\infty.$ The set $\R\cup\{\infty\}$, as a subset of the Riemann sphere, is a circle. Hence, $L$ is a \emph{chord} of $\R\cup\{\infty\}$. In this sense, the adjective ``chordal'' in ``chordal Loewner equation'' suggests that we are connecting points of the real line with $\infty$ by the ``chord'' $\cup_{t\geq0}K_t$.} We have $G_t=\frac1{\sqrt{z^2-2t}}$ and thus the measure $\mu_t$ is the arcsine distribution with variance $t$; see Example \ref{arcsine}.
\end{example}

The family $G_t$ can be characterized by the differential equation
\begin{equation}\label{Loewner}
\frac{\partial}{\partial t}G_t(z) = -\frac{\partial}{\partial z}G_t(z) \cdot M_t(z), \quad G_0(z)=\frac1{z}.
\end{equation}

\begin{example}\label{fix} As noted in \cite{MR2053711}, the only fixed point of $\mathcal{L}$, i.e. $M_t=G_t$ for all $t,$ is given by $\mu_t=\mu_{W,t},$ which follows by taking $\alpha=\delta_0$ in equation \eqref{heat} and comparing it with equation \eqref{Loewner}.
\end{example}

\begin{example}If $M_t$ does not depend on $t,$ i.e. 
\begin{equation}\label{Loewner_semi}
\frac{\partial}{\partial t}f_t(z)  = - \frac{\partial}{\partial z}f_t(z) \cdot  M_0(z), \quad f_0(z)=z,
\end{equation} then the mappings $f_t=1/G_t$ form a semigroup with respect to composition: $f_{t+s} = f_t \circ f_s=f_{s}\circ f_t$.
\end{example}

From this last example we see that both \eqref{semigroup} and \eqref{Loewner_semi} describe semigroups with respect to different convolutions. In \eqref{semigroup} we have
$$ \mu_{t+s} = \mu_t \boxplus \mu_s, $$
while $\mu_t$ in \eqref{Loewner_semi} satisfies
$$ \mu_{t+s} = \mu_t \rhd \mu_s,$$
because  $f_t=F_{\mu_t}.$\\

Next we look at the non-autonomous versions of these equations from the perspective of monotone, anti-monotone and free probability theory.

\subsection{Monotone evolution families}

A (one-real-parameter) monotone semigroup $(\mu_{t})_{t\geq0}$ is a family of probability measures having the property $\mu_{t+s}=\mu_t \rhd \mu_s,$ $\mu_0=\delta_0.$ Now we generalize monotone semigroups to monotone evolution families.

\begin{definition}\label{LME}
We call a collection $(\sigma_{s,t})_{0\leq s\leq t}$ of probability measures a \emph{monotone evolution family} if it satisfies the conditions
\begin{itemize}
\item[(a)] $\sigma_{t,t}=\delta_0$,
\item[(b)] $\sigma_{s,t} = \sigma_{u,t} \rhd \sigma_{s,u}$ whenever $0\leq s\leq u\leq t$,
\item[(c)] $\sigma_{s,u}$ converges weakly to $\sigma_{s,t}$ as $u\to t$.
\end{itemize}
In addition, it is called \emph{normal} if the first and second moments exist and
\begin{itemize}
\item[(d)] $\int_\R x \sigma_{s,t}(dx)=0$ and $\int_\R x^2 \sigma_{s,t}(dx)=t-s$ for all $0\leq s \leq t.$
\end{itemize}
\end{definition}

 Let $(\nu_t)_{t\geq0}$ be a family of probability measures such that the Cauchy transforms $M_t=\int_\R\frac1{z-u}\,\nu_t(du)$ satisfy \eqref{regularity}, and consider the ``time reversed'' version of \eqref{ODE1}:

\begin{equation}
\label{ODE2}
\frac{\partial}{\partial t}\varphi_{s,t} = -M_t(\varphi_{s,t}) \qquad \text{for a.e. $t\in[s, \infty)$},\quad \varphi_{s,s}(z)=z\in\Ha.
\end{equation}
\begin{remark} Fix some $T>0.$ Let $h_t$ be the solution to $\frac{\partial}{\partial t}h_{t} = M_{T-t}(h_t).$ Then $h_T$ is the inverse of $\varphi_{0,T}.$ 
\end{remark}

According to \cite[Theorem 4]{MR1201130}, \eqref{ODE2} has a unique solution $\varphi_{s,t}:\Ha\to \Ha$, which is an evolution family of holomorphic mappings in the sense that 
\begin{equation}\label{evol}\varphi_{t,t}(z)=z, \qquad  \varphi_{s,t} = \varphi_{u,t} \circ \varphi_{s,u} \quad \text{whenever $0\leq s\leq u\leq t$.} \end{equation}

These solutions are exactly the $F$-transforms of normal monotone evolution families.

\begin{theorem}\label{Sonntag}
Let $(\nu_t)_{t\geq0}$ and $\varphi_{s,t}$ be defined as above. For each $0\leq s\leq t$, the mapping $\varphi_{s,t}$ is the $F$--transform of a probability measure $\sigma_{s,t},$ and  $(\sigma_{s,t})_{0\leq s\leq t}$ is a normal monotone evolution family.\\

Conversely, let $(\sigma_{s,t})_{0\leq s\leq t}$ be a normal monotone evolution family and let $\varphi_{s,t}$ be the $F$--transform of  $\sigma_{s,t}.$ Then there exists a family $(\nu_t)_{t\geq0}$ of probability measures such that \eqref{regularity} holds and $\varphi_{s,t}$ satisfies the Loewner equation \eqref{ODE2}.
\end{theorem}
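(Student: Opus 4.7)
The plan is to translate the statement into the language of $F$--transforms. For probability measures on $\R$, composition of $F$--transforms encodes monotone convolution, so conditions (a) and (b) in Definition \ref{LME} become $\varphi_{t,t}=\operatorname{id}$ and the cocycle property \eqref{evol} for $\varphi_{s,t}:=F_{\sigma_{s,t}}$. The main analytic ingredients are the Nevanlinna representation of Pick functions $\Ha\to\overline{\Ha}$ and the hydrodynamic Laurent expansion of $\varphi_{s,t}$ at $\infty$; condition (d) will be read off from this expansion.

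For the forward direction, fix $s\geq 0$. Existence, uniqueness, and the cocycle identity for the self-maps $\varphi_{s,t}:\Ha\to\Ha$ follow from \cite[Theorem 4]{MR1201130}. Writing $M_t(z)=1/z+m_1(t)/z^2+\LandauO(1/z^3)$ and $\varphi_{s,t}(z)=z+a_0(s,t)+a_1(s,t)/z+\dots$ in \eqref{ODE2} and matching coefficients inductively, one finds $a_0\equiv 0$ and $a_1(s,t)=-(t-s)$, so that
\begin{equation*}
\varphi_{s,t}(z)=z-\frac{t-s}{z}+\LandauO(1/z^2) \quad \text{non-tangentially as $z\to\infty$}.
\end{equation*}
In particular $\varphi_{s,t}(z)/z\to 1$, which via the Nevanlinna representation yields a unique probability measure $\sigma_{s,t}$ with $\varphi_{s,t}=F_{\sigma_{s,t}}$. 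Comparing the expansion above with $F_\mu(z)=z-m_1(\mu)+(m_1(\mu)^2-m_2(\mu))/z+\dots$ gives $m_1(\sigma_{s,t})=0$ and $m_2(\sigma_{s,t})=t-s$, i.e.\ (d). Property (a) is $F_{\delta_0}=\operatorname{id}$, and (b) is the defining identity of $\rhd$ applied to \eqref{evol}. Property (c) follows from continuity of $u\mapsto\varphi_{s,u}$ on compacta of $\Ha$, a standard consequence of Carath\'eodory ODE theory, together with the fact that local uniform convergence of $F$--transforms sharing the hydrodynamic expansion forces weak convergence of the underlying measures.

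For the converse, set $\varphi_{s,t}:=F_{\sigma_{s,t}}$; by (c) and Stieltjes continuity these are jointly continuous self-maps of $\Ha$, and (d) gives $\varphi_{s,t}(z)=z-(t-s)/z+\LandauO(1/z^2)$. The cocycle identity $\varphi_{s,t+h}(z)-\varphi_{s,t}(z)=(\varphi_{t,t+h}-\operatorname{id})(\varphi_{s,t}(z))$ reduces the construction of the driver to a time-slice analysis. The map $z\mapsto z-\varphi_{t,t+h}(z)$ sends $\Ha$ into $\overline{\Ha}$ with asymptotics $h/z+\LandauO(1/z^2)$, so its Nevanlinna representation yields a probability measure $\nu_t^{(h)}$ such that $(\varphi_{t,t+h}(z)-z)/h=-G_{\nu_t^{(h)}}(z)$. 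The main obstacle is to extract a genuine (not merely subsequential) limit $\nu_t$ as $h\to 0^+$ for almost every $t$, in such a way that the measurability condition \eqref{regularity} holds. Uniform tightness of $\{\nu_t^{(h)}\}$ comes from the second-moment identity in (d); a Lebesgue-differentiation argument applied to the vector-valued measure $t\mapsto\nu_t^{(h)}\,dt$ on compact subintervals then produces $\nu_t$ a.e., and a direct substitution verifies \eqref{ODE2}. Alternatively, one invokes the chordal Loewner-Kufarev characterization of \cite{MR1201130}, whose hypotheses on hydrodynamically normalized evolution families of self-maps of $\Ha$ are matched precisely by (a)--(d) above.
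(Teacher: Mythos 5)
Your overall scheme---Nevanlinna representation plus the hydrodynamic expansion plus the Goryainov--Ba theory---is the same as the paper's, and the key citations you invoke are the right ones. However, the forward direction as written has a gap.

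You derive the expansion $\varphi_{s,t}(z)=z-(t-s)/z+\LandauO(1/z^2)$ by ``matching coefficients inductively'' after writing $M_t(z)=1/z+m_1(t)/z^2+\LandauO(1/z^3)$. This latter expansion presupposes that $\nu_t$ has a finite second moment, but $\nu_t$ is only assumed to be a probability measure; for a general $\nu_t$, one only has $M_t(z)=1/z+\Landauo(1/z)$ nontangentially, and no further terms in the Laurent series exist. So the ``inductive coefficient matching'' breaks down at the very first nontrivial order. The conclusion $z(\varphi_{s,t}(z)-z)\to -(t-s)$ is still true, but it requires a genuine limiting argument (integrating the ODE and passing to the limit under the integral with dominated convergence, controlling $z/\varphi_{s,\tau}(z)\to 1$), not a termwise comparison of power series. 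The paper sidesteps this entirely by citing the Goryainov--Ba result that the solution belongs to their class $\mathcal{P}$, i.e.\ admits the representation $\varphi_{s,t}(z)=z+\int_\R(u-z)^{-1}\beta_{s,t}(du)$ with $\beta_{s,t}(\R)=t-s$, and then feeding this into Maassen's Proposition 2.2 to extract the probability measure with the required moments. You should do the same, or replace the coefficient matching by the dominated-convergence argument.

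In the converse direction your primary suggestion (build $\nu_t^{(h)}$ from the time slice, then use a Lebesgue differentiation argument for the vector measure) is not wrong in outline, but it hides a real difficulty: without extra work the a.e.\ limit is only a sub-probability measure, and showing total mass $1$ a.e.\ is precisely the delicate point. You do offer the correct alternative---invoke the Goryainov--Ba characterization directly---which is exactly what the paper does: conditions (a), (b), (d) translate via Maassen into the class $\mathcal{P}$ setting, condition (c) translates into local uniform continuity, and then \cite[Theorem~3]{MR1201130} supplies the driving family $(\nu_t)$ and the ODE \eqref{ODE2}. I would drop the Lebesgue-differentiation sketch and simply present the citation route, as the paper does.
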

\begin{proof}
We begin with the first part of the statement:\\
Each $\varphi_{s,t}$ is a univalent mapping from $\Ha$ into itself and can be represented as
 \begin{equation}
\label{star}\tag{$*$}\varphi_{s,t}(z)=z+\int_{\R}\frac{1}{u-z}\,\beta_{s,t}(du),
\end{equation}
where $\beta_{s,t}$ is a finite Borel measure with  $\beta_{s,t}(\R)=t-s$, see \cite{MR1201130}, Theorem 4 and the definition of the class $\mathcal{P}$ on p.1210.
From \cite[Proposition 2.2]{MR1165862} it follows that $\varphi_{s,t}$ is the $F$--transform of a probability measure $\sigma_{s,t}$ which has mean 0 and variance  $\beta_{s,t}(\R)=t-s$.\\
Because of \eqref{evol}, the conditions (a) and (b) in Def. \ref{LME} are satisfied.\\
%By using the notion from \cite[Definition 3.1]{MR2995431}, we can say that $\varphi_{s,t}$ is an evolution family of order $\infty,$
Furthermore, we have
\begin{equation}\label{regularity2}|\varphi_{s,u}(z) - \varphi_{s,t}(z)| \leq \frac{|t-u|}{\Im(z)},\end{equation} 
see \cite[p.1214]{MR1201130}. By Theorem 2.5 in \cite{MR1165862}, we conclude that $\sigma_{s,u}$ 
converges weakly to $\sigma_{s,t}$ as $u\to t,$ i.e. condition (c) holds as well. \\

Next, let $\sigma_{s,t}$ be a normal monotone evolution family and let $\varphi_{s,t}$ be the $F$--transform of $\sigma_{s,t}.$ As $\sigma_{s,t}$ has mean 0 and finite variance, the mapping $\varphi_{s,t}$ and can be represented as \eqref{star} with $\beta_{s,t}(\R)=t-s$, \cite[Proposition 2.2]{MR1165862}. Condition (a) and (b) imply that $\varphi_{s,t}$ satisfies \eqref{evol}. Furthermore, the weak convergence, condition (c), implies that $\varphi_{s,u}$ converges locally uniformly to $\varphi_{s,t}$ as $u\to t$. From \cite[Theorem 3]{MR1201130}, it follows that $\varphi_{s,t}$ satisfies \eqref{ODE2}.
\end{proof}

\begin{remark} The measures $(\sigma_{t})_{t\geq0}:=(\sigma_{0,t})_{t\geq0}$ form a monotone semigroup, i.e. $\sigma_{t+s}=\sigma_t \rhd \sigma_s$, if and only if $\nu_t$ does not depend on $t.$ In this case, $\sigma_{s,t} = \sigma_{0,t-s}$, and so the whole evolution family is reduced to a semigroup.
\end{remark}
\begin{remark} Consider a monotone evolution family which is not necessarily normal. Under a suitable assumption on absolute continuity of $t\mapsto \varphi_{s,t}(z)$, the
 $F$--transforms $\varphi_{s,t}$ will now satisfy the differential equation
\begin{equation*}
\frac{\partial}{\partial t}\varphi_{s,t} = -P_t(\varphi_{s,t}) \qquad \text{for a.e. $t\in[s, \infty)$},\quad  \varphi_{s,s}(z)=z\in\Ha,
\end{equation*}
where $P_t$ has the form $$P_t = \gamma_t + \int_\R\frac{1+xz}{z-x} \tau_t(dx),$$
and $\gamma_t\in\R$ and $\tau_t$ is a positive finite measure for a.e. $t\geq0.$ This can be proven similarly and the differential equation is the time-dependent version of the monotone L\'{e}vy-Khintchine formula, see \cite[Section 4]{MR1853184}.\\
In general, however, differentiability almost everywhere does not hold: Let $\varphi_{s,t}(z) = z + C(t)-C(s),$ where $C:[0,\infty)\to \mathbb{R}$ is any continuous function. 
These functions are the F--transforms of the monotone evolution family 
$\{\delta_{C(s)-C(t)}\}_{0\leq s\leq t}$, but
in general, $t\mapsto \varphi_{s,t}(z)$ is not \emph{differentiable almost everywhere}.
\end{remark}

To summarize, for normal monotone evolution families we have three equivalent objects:
$$(\sigma_{s,t})_{0\leq s\leq t}\qquad  \underset{\text{F--transform}}{\longleftrightarrow} \qquad(\varphi_{s,t})_{0\leq s\leq t}  \qquad\underset{\text{Equation \eqref{ODE2}}}{\longleftrightarrow} \qquad(\nu_t)_{0\leq t}. $$

\begin{remark} There exists also a \emph{multiplicative} monotone convolution, see \cite{ber} and the references therein, which naturally corresponds to the so-called radial Loewner equation. This differential equation has been considered originally by C. Loewner in \cite{MR1512136}. 
\end{remark}

\subsection{Anti-monotone evolution families} 

Quite similarly, one defines anti-monotone independence and anti-monotone convolution, see \cite{MR1974583}. For probability measures $\mu, \nu$, the \emph{anti-monotone convolution} $\mu \lhd \nu$ is defined by $F_{\mu \lhd \nu} = F_{\nu} \circ F_{\mu}.$

\begin{definition}
We call a collection $(\sigma_{s,t})_{0\leq s\leq t}$ of probability measures a \emph{(normal) anti-monotone evolution family} if it satisfies the conditions of Definition \ref{LME} with (b) replaced by
\begin{itemize}
\item[(b')] $\sigma_{s,t} = \sigma_{u,t} \lhd \sigma_{s,u}$ whenever $0\leq s\leq u\leq t$.
\end{itemize}
\end{definition}

The family $(\varphi_{s,t})_{0\leq s\leq t}:=(F_{\sigma_{s,t}})_{0\leq s\leq t}$ is now what is called a \emph{reverse evolution family} in Loewner theory, see \cite[Definition 1.9]{MR3184323}, and one obtains analogously to Theorem \ref{Sonntag}:

\begin{theorem} Let $t>0$
and let $(\nu_s)_{0\leq s \leq t}$ be a family of probability measures satisfying \eqref{regularity} and let $M_s$ be the Cauchy transform of $\nu_s$. Denote by $\varphi_{s,t}$ the solution to 
\begin{equation}\label{ODE3}\frac{\partial}{\partial s}\varphi_{s,t} = M_s(\varphi_{s,t}) \qquad \text{for a.e. $s\in[0,t]$},\quad \varphi_{t,t}(z)=z\in\Ha.
\end{equation}
Then $\varphi_{s,t}$ is the $F$--transform of a measure $\sigma_{s,t},$ and  $(\sigma_{s,t})_{0\leq s\leq t}$ is a (part of a) normal anti-monotone evolution family.\\
Conversely, let $(\sigma_{s,t})_{0\leq s\leq t}$ be a normal anti-monotone evolution family and let $\varphi_{s,t}$ be the $F$--transform of $\sigma_{s,t}.$ For every $t>0$, there exists a family $(\nu_s)_{0\leq s \leq t}$ of probability measures such that \eqref{regularity} holds and $\varphi_{s,t}$ satisfies the (reverse) Loewner equation \eqref{ODE3}.
\end{theorem}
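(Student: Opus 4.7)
\emph{Strategy.} The plan is to reduce the entire statement to Theorem \ref{Sonntag} by a single time reversal. For fixed $t>0$, setting $\psi_r(z) := \varphi_{t-r,t}(z)$ and $\tilde{M}_r := M_{t-r}$ transforms \eqref{ODE3} into $\partial_r\psi_r = -\tilde{M}_r(\psi_r)$ with $\psi_0=\operatorname{id}$, which is of the form \eqref{ODE2} with starting time $0$. Likewise, $F_{\mu\lhd\nu} = F_\nu\circ F_\mu = F_{\nu\rhd\mu}$ shows that anti-monotone convolution is just monotone convolution with the arguments swapped, so every conclusion of Theorem \ref{Sonntag} transports through this time reversal.

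\emph{Forward direction.} By \cite[Theorem 4]{MR1201130}, $\psi_r$ exists, is univalent from $\Ha$ into $\Ha$, and admits a Nevanlinna representation $\psi_r(z) = z + \int_\R (u-z)^{-1}\,\tilde\beta_r(du)$ with $\tilde\beta_r(\R)=r$. Translating back, $\varphi_{s,t}(z) = z + \int_\R (u-z)^{-1}\,\beta_{s,t}(du)$ with $\beta_{s,t}(\R)=t-s$, and \cite[Proposition 2.2]{MR1165862} identifies $\varphi_{s,t}$ as the $F$-transform of a probability measure $\sigma_{s,t}$ of mean $0$ and variance $t-s$, giving condition (d). Condition (a) is immediate from $\varphi_{t,t}=\operatorname{id}$. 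For (b') I would use ODE uniqueness on $[s,u]$: the functions $\varphi_{s,t}$ and $\varphi_{s,u}\circ\varphi_{u,t}$ of the variable $s$ both satisfy \eqref{ODE3} and coincide at $s=u$ with value $\varphi_{u,t}$, hence agree everywhere; taking $F$-transforms yields $\sigma_{s,t} = \sigma_{u,t}\lhd\sigma_{s,u}$. For (c) with $u\le t$, write $\varphi_{s,t}(z) - \varphi_{s,u}(z) = \varphi_{s,u}(\varphi_{u,t}(z)) - \varphi_{s,u}(z)$ and combine the estimate $|\varphi_{u,t}(z)-z|\le (t-u)/\Im z$ (obtained by integrating \eqref{ODE3} and using $|M_r(w)|\le 1/\Im w$ together with the invariance $\Im\varphi_{r,t}(z)\ge\Im z$) with the local Lipschitz property of the holomorphic $\varphi_{s,u}$; this produces locally uniform convergence, and weak convergence of $\sigma_{s,u}$ to $\sigma_{s,t}$ then follows via \cite[Theorem 2.5]{MR1165862}.

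\emph{Converse direction and main obstacle.} Given a normal anti-monotone evolution family $(\sigma_{s,t})_{0\le s\le t}$, I fix $t$ and define $\tilde\sigma_{a,b} := \sigma_{t-b,t-a}$ for $0\le a\le b\le t$. Using $\mu\lhd\nu = \nu\rhd\mu$, property (b') for $\sigma$ becomes property (b) for $\tilde\sigma$, and (a), (d) transport immediately. The main technical subtlety is condition (c) for $\tilde\sigma$, which amounts to weak continuity of $\sigma_{s,t}$ in its \emph{first} argument, something not built into the anti-monotone definition. Fortunately it follows from (b') and (d) alone: Chebyshev applied to the mean-zero variance-$|s-s_0|$ measures $\sigma_{s,s_0}$ and $\sigma_{s_0,s}$ forces them to converge weakly to $\delta_0$ as $s\to s_0$, so their $F$-transforms tend locally uniformly to $\operatorname{id}$; combining this with the factorizations $F_{\sigma_{s,t}} = F_{\sigma_{s,s_0}}\circ F_{\sigma_{s_0,t}}$ for $s\le s_0$ and $F_{\sigma_{s_0,t}} = F_{\sigma_{s_0,s}}\circ F_{\sigma_{s,t}}$ for $s\ge s_0$ (the latter handled via a tightness/subsequence argument using the uniform variance bound) yields $\sigma_{s,t}\to\sigma_{s_0,t}$ weakly. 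Once $\tilde\sigma$ is confirmed as a normal monotone evolution family, the converse of Theorem \ref{Sonntag} supplies Cauchy transforms $\tilde{M}_r$, and setting $M_s := \tilde{M}_{t-s}$ recovers \eqref{ODE3}. I expect this first-variable-continuity step to be the main technical obstacle; the rest is essentially bookkeeping through the time reversal.
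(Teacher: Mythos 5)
Your proof is correct and follows the same overall route as the paper's: both reduce the anti-monotone case to Theorem \ref{Sonntag} by the time reversal $s\mapsto t-s$, using the identity $F_{\mu\lhd\nu}=F_{\nu\rhd\mu}$ to turn the hierarchy $\sigma_{s,t}=\sigma_{u,t}\lhd\sigma_{s,u}$ into the monotone hierarchy for $\tilde\sigma_{a,b}:=\sigma_{t-b,t-a}$. The one place you genuinely diverge is the step you correctly flag as the main obstacle, namely weak continuity of $\sigma_{s,t}$ in the \emph{first} variable (equivalently, condition (c) for $\tilde\sigma$). The paper disposes of this in one line by writing $\varphi_{s,t}=\varphi_{0,s}^{-1}\circ\varphi_{0,t}$ (legitimate since $\varphi_{0,t}=\varphi_{0,s}\circ\varphi_{s,t}$ and the $\varphi_{0,s}$ are univalent Nevanlinna-type maps) and invoking \cite[Theorem 2.5]{MR1165862} to convert the known continuity of $s\mapsto\varphi_{0,s}$ into continuity of the inverses. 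Your alternative — Chebyshev on the mean-zero, variance-$|s-s_0|$ increments $\sigma_{s,s_0}$ and $\sigma_{s_0,s}$ to force them weakly to $\delta_0$, then the two factorizations $F_{\sigma_{s,t}}=F_{\sigma_{s,s_0}}\circ F_{\sigma_{s_0,t}}$ and $F_{\sigma_{s_0,t}}=F_{\sigma_{s_0,s}}\circ F_{\sigma_{s,t}}$, with a tightness/subsequence argument for the right-sided limit — is a valid, somewhat more self-contained derivation that avoids reasoning about inverses of a locally uniformly convergent sequence of univalent maps. Your forward direction likewise re-proves the ingredients of Theorem \ref{Sonntag} for the reversed ODE rather than citing \cite[Theorem 4.2 (i)]{MR3184323} and then quoting Theorem \ref{Sonntag} wholesale as the paper does; the estimates you use ($\Im\varphi_{s,t}\ge\Im z$, $|\varphi_{u,t}(z)-z|\le(t-u)/\Im z$, ODE uniqueness for the composition law) are exactly the ones underlying the cited results, so this is essentially a matter of how much to delegate.
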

\begin{proof}
We consider the second statement first. Let $\varphi_{s,t}$ be the $F$--transforms of a normal anti-monotone evolution family $\sigma_{s,t}.$ First, note that $\sigma_{s,t}$ is also continuous w.r.t. to the variable $s.$ This follows from writing 
$\varphi_{s,t}=\varphi_{0,s}^{-1}\circ \varphi_{0,t}$ and using \cite[Theorem 2.5]{MR1165862}.\\
Let $T\geq t.$ It is now easy to see that $\{\sigma_{T-v, T-u}\}_{0\leq u\leq v\leq T}$ is (a part of) a normal monotone evolution family 
and we obtain equation \eqref{ODE3} by applying Theorem \ref{Sonntag} to differentiate w.r.t. to $v$ (and then changing $T-v$ to $s$, $T-u$ to $t$).\\
Now we consider the converse statement. We can first solve equation \eqref{ODE3} (see \cite[Theorem 4.2 (i)]{MR3184323}) and then reverse the time again to obtain the  
$F$--transforms of a normal monotone evolution family by Theorem \ref{Sonntag}. This implies that $\{\varphi_{s,t}\}_{0\leq s \leq t}$ corresponds to a normal anti-monotone
evolution family.
\end{proof}

\begin{remark} The chordal Loewner equation \eqref{ODE1} differs from \eqref{ODE3} only in 
the initial value, i.e. $\varphi_{t,t}(z)=z$
instead of $\varphi_{0,t}(z)=z.$\\
Equivalently (see \cite[Theorem 4.2 (ii)]{MR3184323}), one can describe $\varphi_{s,t}$ as the solution to
\begin{equation}\label{PDE2}\frac{\partial}{\partial t}\varphi_{s,t}(z) = -\frac{\partial}{\partial z}\varphi_{s,t}(z)\cdot M_t(z) \qquad \text{for a.e. $t\in[s,\infty)$}, \quad \varphi_{s,s}(z)=z\in\Ha,
\end{equation}
where $M_t$ is again the Cauchy transform of a probability measure $\nu_t.$
This equation basically follows by taking the derivative w.r.t $t$ in the
relation $\varphi_{s,t}\circ \varphi_{t,u}=\varphi_{s,u},$ $0\leq s\leq t\leq u$,
and using \eqref{ODE3}. \\
Note that \eqref{PDE2} is nothing but \eqref{PDE1}.
\end{remark}

\subsection{The slit equation}

The most prominent Loewner equation is the so-called slit equation, which simply corresponds to  $\nu_t=\delta_{U(t)}$, where $U:[0,\infty)\to\R$ is a continuous function. Both  equations, \eqref{ODE2} and \eqref{ODE3}, are called slit equations in this case.\\ 

Let us stay now in the setting of monotone probability theory. Equation \eqref{ODE2}  is given by

\begin{equation}\label{slit}\frac{\partial}{\partial t}\varphi_{s,t} = \frac{1}{U(t)-\varphi_{s,t}} \qquad \text{for a.e. $t\in[s, \infty)$}, \quad \varphi_{s,s}(z)=z\in\Ha.\end{equation}

If the so called driving function $U$ is smooth enough, then the solutions $\varphi_{s,t}$ are conformal mappings of the form $\varphi_{s,t}:\Ha\to \Ha\setminus \gamma_{s,t},$ where $\gamma_{s,t}$ is a simple curve $\gamma_{s,t}:[s,t]\to\overline{\Ha}$ with $\gamma_{s,t}(s)=U(s)\in\R$ and $\gamma_{s,t}(s,t]\subset\Ha.$ Such a curve is also called a \emph{slit} of the upper half-plane. For the smoothness conditions, we refer to \cite{MarshallRohde:2005, Lind:2005, LindMR:2010}. \\
Conversely, for every slit $\gamma$ there exists $T>0$ and $U:[0,T]\to\R$ such that the solution of \eqref{slit} satisfies $\varphi_{0,T}(\Ha) = \Ha\setminus \gamma$; see \cite{GM13} and the references therein.

 If $U(t)\equiv u\in\R,$ then the solution $\varphi_{s,t}=\varphi_{0,t-s}$ to \eqref{slit} is given by $$\varphi_{0,t}(z)=u+\sqrt{(z-u)^2-2t},$$ which maps $\Ha$ onto $\Ha$ minus a straight line segment from $u$ to $u+i\sqrt{2t}.$
The corresponding probability measure is given by $\sigma_{0,t}=\delta_{-u} \rhd \mu_{A,t}\rhd \delta_{u}.$\\

 If $U(t)$ is not constant, then one can approximate $\varphi_{0,t}$ by the solution of a piecewise constant driving function.\\
Choose $N\in\N$ and let $\Delta t=\frac1{N}$ be a time interval.  Assume we are interested in $\varphi_{0,K\Delta t},$ $K\in\N.$ Approximately, it can be obtained as follows: Let $\Delta_0, \Delta_1,...$ be defined by $\Delta_k = U((k+1)\Delta t)-U(k\Delta t).$ We have $\sigma_{k\Delta t, (k+1)\Delta t}\approx \delta_{-\Delta_k} \rhd \mu_{A,\Delta t}\rhd \delta_{\Delta_k},$ and consequently
\begin{equation}\label{rev_SLE_approx}
\sigma_{0,K\Delta t} = \sigma_{(K-1)\Delta t,K\Delta t} \rhd \sigma_{0,(K-1)\Delta t} = ... \approx
\mathop{\rhd}\limits_{k=0}^{K-1} \left(\delta_{-\Delta_k} \rhd \mu_{A,\Delta t}\rhd \delta_{\Delta_k}\right)=: \sigma^N_{0,K\Delta t}.
\end{equation}
We note that for the computation of the conformal mappings, a slightly different approximation is more suitable for practical use, see \cite{MR2348786}.

\begin{example}[Schramm-Loewner Evolution] Let $B_t:[0,\infty)\to\R$ be a standard Brownian motion and $\kappa\in(0,\infty).$ Let $\nu_t=\delta_{\sqrt{\kappa/2}B_t}.$ The solution $g_t$ to the stochastic differential equation \eqref{ODE1}, i.e.
$$\frac{\partial}{\partial t}g_t = \frac{1}{g_t - \sqrt{\kappa/2}B_t}, \quad g_0(z)=z,$$
describes the growth of a random curve in $\Ha$ from $0$ to $\infty$, which is called Schramm-Loewner evolution (SLE). This curve is a slit with probability one if and only if $\kappa\in(0,4]$. SLE and its generalizations have important applications in statistical mechanics and probability theory. We refer to \cite{Lawler:2005} for an introduction.\\

The solution to \eqref{slit} with $U(t)=\sqrt{\kappa/2}B_t$ is called backward SLE (see \cite{MR3477781}). It corresponds to a
 (classically) random normal monotone evolution family $(\sigma_{s,t})_{0\leq s\leq t}.$\\
Now we can approximate $\sigma_{0,K\Delta t}$ as follows: Let $\Delta_0, \Delta_1,...$ be a sequence of (classically) independent normally distributed  random variables with mean 0 and variance $\frac{\kappa}{2}\Delta t$. Then
\begin{equation*}
\sigma_{0,K\Delta t} \approx
\mathop{\rhd}\limits_{k=0}^{K-1} \left(\delta_{-\Delta_k} \rhd \mu_{A,\Delta t}\rhd \delta_{\Delta_k}\right) =: \sigma^N_{0,K\Delta t}.
\end{equation*}
We have $\sigma_{0,K\Delta t}=\lim_{N\to\infty}\sigma^N_{0,K\Delta t}$ in the sense of convergence in distribution with respect to the topology induced by weak convergence; see \cite{MR3409694} for an even stronger statement.
\end{example}

\subsection{Free evolution families}

Let $(\mu_t)_{t\geq0}$ be a free semigroup, i.e. $\mu_{t+s}=\mu_t \boxplus \mu_s.$ In this case, $R_\mu$ has an analytic extension to $\Ha$ and
\begin{equation}\label{not_normal}
R_\mu = \alpha + \int_\R \frac{z+x}{1-xz}\,\nu'(dx)
\end{equation}
with $\alpha\in\R$ and $\nu'$ is a finite positive measure. Moreover, $\mu$ has mean 0 and finite variance $\sigma^2$ if and only if
$$R_\mu = \int_\R \frac{z}{1-xz}\,\nu(dx)= \int_\R \frac{1}{1/z-x}\, \nu(dx) = G_{\nu}(1/z),$$
where $\nu$ is a  measure with $\nu(\R)=\sigma^2;$ see \cite[Theorem 6.2]{MR1165862} and \cite[Theorem 5.10]{MR1254116}, \cite[Section 4.1]{MR3129803}. \\

By generalizing equation \eqref{semigroup}, we obtain evolution families with respect to the free convolution; see \cite{MR1605393}.

\begin{definition}
We call a collection $(\sigma_{s,t})_{0\leq s\leq t}$ of probability measures a \emph{(normal) free evolution family} if it satisfies the conditions of Definition \ref{LME} with (b) replaced by
\begin{itemize}
\item[(b'')] $\sigma_{s,t} = \sigma_{u,t} \boxplus \sigma_{s,u}$ whenever $0\leq s\leq u\leq t$.
\end{itemize}
\end{definition}

Let $\nu_t$ be a family of probability measures  such that $t\mapsto G_{\nu_t}(z)$ is measurable for every $z\in\Ha.$ 

Now we consider the non-autonomous version of equation \eqref{semigroup} with $\alpha=\delta_0$ and we replace $R(z)$ by $G_{\nu_t}(1/z)$:
\begin{equation*}
\frac{\partial}{\partial t}G_{s,t}(z) = -\frac{\partial}{\partial z}G_{s,t}(z)\cdot G_{\nu_t}(1/G_{s,t}(z))\quad  \text{for a.e.}\;  t\in[s,\infty), \quad G_{s,s}(z) = \frac1{z}. \end{equation*}
The $R$--transform in free probability theory corresponds to the $F$--transform in monotone probability theory. So, instead, we take $R_{s,t}=G_{s,t}^{-1}-\frac1{z}$ and obtain the simple equation
\begin{equation}\label{semigroup2}
\frac{\partial}{\partial t}R_{s,t}(z) = G_{\nu_t}(1/z)\quad  \text{for a.e.}\; t\in[s,\infty), \quad R_{s,s}(z) = 0. \end{equation}

\begin{theorem} Under the above assumptions, \eqref{semigroup2} has a unique solution $R_{s,t}$. For all $0\leq s\leq t,$ $R_{s,t}$ is the $R$--transform of a probability measure $\sigma_{s,t}$ and the collection $(\sigma_{s,t})_{0\leq s\leq t}$ is a normal free evolution family.\\
Conversely,  let $(\sigma_{s,t})_{0\leq s\leq t}$ be a normal free evolution family. Then there exists a family $(\nu_t)_{t\geq0}$ of probability measures such that $G_{\nu_t}$ satisfy \eqref{regularity} and the $R$--transform $R_{s,t}$ of $\sigma_{s,t}$ satisfies equation \eqref{semigroup2}.
\end{theorem}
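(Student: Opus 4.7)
The plan is to handle the forward direction by direct integration of the ODE, and the converse by a disintegration/Lebesgue-differentiation argument.

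For the forward direction, the right-hand side of \eqref{semigroup2} does not involve $R_{s,t}$, so the equation integrates immediately to
\begin{equation*}
R_{s,t}(z) = \int_s^t G_{\nu_u}(1/z)\, du,
\end{equation*}
giving existence and uniqueness. The regularity \eqref{regularity} on $G_{\nu_u}$ together with Stieltjes inversion implies that $u\mapsto \nu_u(B)$ is measurable for each Borel $B\subset\R$, so I can define a finite positive Borel measure
\begin{equation*}
\tau_{s,t}(B) := \int_s^t \nu_u(B)\, du
\end{equation*}
with total mass $t-s$. By Fubini, $R_{s,t}(z) = G_{\tau_{s,t}}(1/z)$, and by the Nevanlinna-type characterization recalled just before the statement (cf.\ \eqref{not_normal}), $R_{s,t}$ is the $R$-transform of a unique probability measure $\sigma_{s,t}$ with mean $0$ and variance $t-s$. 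Additivity of the integral yields $R_{s,t}=R_{s,u}+R_{u,t}$ for $s\leq u\leq t$, which (using commutativity of free convolution on $\R$) gives the evolution property~(b''); conditions (a) and (c) follow from $R_{t,t}=0$ and from the continuity of $t\mapsto R_{s,t}(z)$ via the Stieltjes continuity theorem.

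For the converse, normality allows me to apply the same Nevanlinna representation to each $R_{s,t}$ of the given family: $R_{s,t}(z)=G_{\tau_{s,t}}(1/z)$ for a unique finite positive Borel measure $\tau_{s,t}$ with $\tau_{s,t}(\R)=t-s$. Rewriting (b'') as $R_{s,t}=R_{u,t}+R_{s,u}$ and invoking uniqueness of the Nevanlinna representation, I obtain additivity $\tau_{s,t}=\tau_{u,t}+\tau_{s,u}$ at the level of measures. Setting $\tau_t:=\tau_{0,t}$, the family $(\tau_t)_{t\geq 0}$ is monotone increasing (as measures) with $\tau_t(\R)=t$.

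The main step is to disintegrate this increasing family of measures. I would define a positive Borel measure $T$ on $[0,\infty)\times\R$ by
\begin{equation*}
T([s,t]\times B) := \tau_{s,t}(B),
\end{equation*}
extended by additivity in the first variable. Because $\tau_{s,t}(\R)=t-s$, the marginal of $T$ on $[0,\infty)$ is Lebesgue measure, so disintegration produces a measurable family of probability measures $(\nu_u)_{u\geq 0}$ with $\tau_{s,t}(B)=\int_s^t \nu_u(B)\,du$. Measurability of $u\mapsto G_{\nu_u}(z)$ on $\Ha$ follows from measurability of $u\mapsto\nu_u(B)$. Differentiating the identity
\begin{equation*}
R_{s,t}(z)=\int_s^t G_{\nu_u}(1/z)\,du
\end{equation*}
in $t$ at Lebesgue points of $u\mapsto G_{\nu_u}(1/z)$ then yields \eqref{semigroup2} for a.e.\ $t$. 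I expect the disintegration to be the technical heart of the argument; once it is in place, the verification that the resulting $(\nu_t)$ drives \eqref{semigroup2} is essentially a rerun of the forward direction, with minor care needed to make the final a.e.-in-$t$ identity hold on a $z$-independent null set, which can be arranged by working with a countable dense set of $z$'s and appealing to analyticity in $z$.
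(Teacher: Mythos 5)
Your forward direction is essentially the paper's: both integrate \eqref{semigroup2} directly and identify $R_{s,t}(z)$ as $G_{\alpha_{s,t}}(1/z)$ for a finite positive measure $\alpha_{s,t}$ with mass $t-s$, then read off normality and the free-evolution relations from additivity of the integral; you make the representing measure explicit via Fubini where the paper invokes a Herglotz/Nevanlinna representation, but these are two descriptions of the same object. In the converse direction, though, you take a genuinely different route. The paper differentiates $t\mapsto R_{s,t}(z)$ directly: the Lipschitz bound $|R_{u,t}(z)|\le (t-u)/\Im(1/z)$ gives a.e.\ differentiability, a countable dense set of $z$'s produces a common exceptional null set, and the Vitali--Porter theorem applied to the difference quotients $h^{-1}R_{t_0,t_0+h}=G_{\beta_{t_0,h}}(1/z)$ (a normal family of Cauchy transforms with mass $\le 1$) upgrades pointwise convergence to locally uniform convergence, identifying the derivative as $G_{\nu_{t_0}}(1/z)$ and then arguing $\nu_{t_0}(\R)=1$ for a.e.\ $t_0$. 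You instead work at the level of the representing measures $\tau_{s,t}$: additivity in time plus $\tau_{s,t}(\R)=t-s$ lets you build a $\sigma$-finite Borel measure $T$ on $[0,\infty)\times\R$ whose time-marginal is Lebesgue, and the disintegration theorem hands you a measurable family of probability kernels $(\nu_u)$ immediately, with the ODE then following by Lebesgue differentiation of $t\mapsto\int_s^t G_{\nu_u}(1/z)\,du$. Your route is more measure-theoretic: it buys you probability measures (mass exactly $1$) and measurability in one shot, at the cost of having to justify that the rectangle set function genuinely extends to a countably additive measure (which you correctly flag as the technical heart; the needed continuity at $\emptyset$ follows from the $1$-Lipschitz bound $0\le \tau_t(B)-\tau_s(B)\le t-s$ together with countable additivity of each $\tau_t$). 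The paper's route is more complex-analytic and avoids product-measure machinery, but needs the normal-family step to pass from differentiability on a countable dense set of $z$'s to all of $\Ha$, and a separate argument to upgrade $\nu_{t_0}(\R)\le 1$ to equality a.e. Both arguments are sound; they are different in method.
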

\begin{proof} Obviously, the solution $R_{s,t}$ of \eqref{semigroup2} is simply given by 
$$R_{s,t}(z) = \int_s^t \int_\R \frac{1}{1/z-x}\, \nu_\tau(dx) d\tau.$$
As $R_{s,t}$ is a holomorphic mapping with $R_{s,t}(\Ha)\subset \overline{\Ha},$ it is easy to see that this function also has the form $R_{s,t}(z)=G_{\alpha_{s,t}}(1/z)$ for a positive measure $\alpha_{s,t};$ see \cite[Lemma 1]{MR1201130}. The behaviour of $R_{s,t}$ for $z$ near $0$ yields that $\alpha_{s,t}(\R)=\int_s^t \nu_\tau(\R) \,d\tau = t-s$. This implies that $R_{s,t}(z)$ is the $R$--transform of a probability measure $\sigma_{s,t}$ with mean 0 and variance $t-s.$ Clearly, $R_{u,t} + R_{s,u}= R_{s,t}$ whenever $0\leq s\leq u\leq t$ and $R_{t,t}=0,$ which implies  $\sigma_{s,t} = \sigma_{u,t} \boxplus \sigma_{s,u}$, $\sigma_{t,t}=\delta_0.$ Furthermore, as $t\mapsto R_{s,t}$ is continuous with respect to locally uniform convergence, we obtain from \cite[Proposition 5.7]{MR1254116} that $\sigma_{s,u}$ converges weakly to $\sigma_{s,t}$ as $u\to t$.\\

Conversely, let $R_{s,t}$ be the $R$--transform of $\sigma_{s,t}.$ Fix some $s\geq0$ and let $t\geq s.$ \\
Then, $R_{s,t}(z)=G_{\alpha_{s,t}}(1/z)$ with $\alpha_{s,t}(\R)=t-s.$ This implies that, for $z\in\Ha,$ the map $t\mapsto R_{s,t}(z)$ is Lipschitz continuous:
$$|R_{s,t}(z) - R_{s,u}(z)| = |R_{u,t}(z)| \leq \int_{\R}\frac1{|1/z-u|} \alpha_{u,t}(du)\leq \frac{t-u}{\Im(1/z)},$$
for all $s \leq u \leq t.$\\
Thus, $t\mapsto R_{s,t}(z)$ is differentiable for every $t\in[s,\infty)$ except a zero set $\mathcal{N}(z).$ By considering a countable dense subset $D\subset \Ha,$ we conclude that there exists a zero set  $\mathcal{N}\subset [s,\infty)$ such that $t\mapsto R_{s,t}(z)$ is differentiable for every $z\in D$ and every $t\in[s,\infty)\setminus \mathcal{N}.$\\  
 Now assume $t\mapsto R_{s,t}$ is differentiable at $t_0$ for all $z\in D$ and let $h>0$. Then $h^{-1}(R_{s,t_0+h}-R_{s,t_0})=h^{-1} R_{t_0,t_0+h}.$ This function can be represented as $h^{-1} R_{t_0,t_0+h}(z) = G_{\beta_{t_0,h}}(1/z)$ for a positive measure $\beta_{t_0,h}$ with $\beta_{t_0,h}(\R) = h^{-1} (t+h - t) = 1.$\\
It can easily be verified that the closure of the set of all Cauchy-transforms of probability measures is the set of all Cauchy-transforms of non-negative measures with mass $\leq 1.$ This family is locally bounded as every such $G_\nu$ satisfies $|G_\nu(z)| \leq \frac{1}{\Im(z)}.$\\
We assumed that the limit $\lim_{h\to0}G_{\beta_{t_0,h}}$ exists for all $z\in D.$ By the the Vitali-Porter theorem, see \cite{MR1211641}, Section 2.4, we have in fact locally uniform convergence and thus $$\lim_{h\to0}G_{\beta_{t_0,h}}(z) = G_{\nu_{t_0}}(z)$$ for a non-negative measure $\nu_{t_0}$ with $\nu_{t_0}(\R)\leq 1.$ In particular, $t\mapsto R_{s,t}(z)$ is differentiable for all $t\in[s,\infty)\setminus \mathcal{N}$ and all $z\in\Ha.$ \\
By the proof of the first part, we have that $\int_s^t \nu_{\tau}(\R)\,d\tau$ is equal to $t-s$; hence $\nu_{\tau}(\R)=1$ for a.e. $\tau \geq s.$ Clearly, we can choose $(\nu_t)_{t\geq0}$ such that $\nu_t$ is a probability measure for every $t\geq0$ and that $t\mapsto G_{\nu_t}(z)$ is measurable for every $z\in\Ha.$
\end{proof}

Thus, a normal free evolution family can be described as:
$$(\sigma_{s,t})_{0\leq s\leq t}\qquad  \underset{\text{R--transform}}{\longleftrightarrow} \qquad(R_{s,t})_{0\leq s\leq t}  \qquad\underset{\text{Equation \eqref{semigroup2}}}{\longleftrightarrow} \qquad(\nu_t)_{0\leq t}. $$

\begin{remark} If we consider a free evolution family, not necessarily normal, then, under a suitable assumption of absolute continuity, the $R$--transforms $R_{s,t}$ correspond to the differential equation
\begin{equation*}
\frac{\partial}{\partial t}R_{s,t} = P_t(R_{s,t}) \qquad \text{for a.e. $t\in[s, \infty)$},\quad R_{s,s}(z)=z\in\Ha,
\end{equation*}
where $P_t$ has the form \eqref{not_normal} with some $\alpha_t\in\R $ and a positive finite measure $\nu'_t$ for a.e. $t\geq0.$
\end{remark}

\begin{example}[``free slit equation''] One can look at the analogue of the slit equation in the free setting in two different ways. First, let $U_t:[0,\infty)\to\R$ be a continuous function and consider $\nu_t=\delta_{U_t},$ i.e. $G_{\nu_t}(z)=\frac1{z-U(t)}.$ Then $R_{s,t}(z) = \int_s^t \frac1{\frac1{z}- U(\tau)} \,d\tau$. \\
Secondly, we look at the analogue of \eqref{rev_SLE_approx} in the free setting, i.e. we replace the arcsine distribution by the semicircle distribution. However, as $\delta_{-\Delta_k} \boxplus \mu_{W,\Delta t}\boxplus \delta_{\Delta_k}=\mu_{W,\Delta t}$ due to commutativity of $\boxplus$, we simply obtain that $\sigma^N_{0,K\Delta t}=\sigma_{0,K\Delta t}=\mu_{W,K\Delta t}$, i.e. we obtain a free Brownian motion $\sigma_{s,t}=\mu_{W,t-s}.$
\end{example}

\section{Further Remarks}\label{realization}

\begin{question} Let $\mu$ be a probability measure such that its $F$--transform $F_\mu$ is injective and $F_{\mu}(\Ha)=\Ha \setminus \gamma$ for a slit $\gamma.$ How can those probability measures $\mu$ be characterized? 
\end{question} 

A basic property of those probability measures is the symmetry with respect to a point $u\in\R$, which is the preimage of the tip of the slit $\gamma$ with respect to the mapping $F_\mu.$ 

\begin{proposition} Let $\mu$ be a probability measure such that $F_\mu$ maps $\Ha$ conformally onto $\Ha\setminus \gamma$, where
$\gamma$ is a slit.
\begin{itemize}
 \item[(a)] Assume $\gamma$ is starting at $0$. Then $\supp \mu$ is a compact interval $[a,b]$ and  
 $\mu$ has a density $d(x)$ on $(a,b).$ 
 \item[(b)] Assume $\gamma$ is starting at $s\in \R\setminus\{0\}.$ Then $\supp \mu= \{x_0\}\cup[a,b]$, where $\mu$ 
 has a density $d(x)$ on the compact interval $[a,b]$ and an atom at some $x_0\in \R\setminus [a,b].$  
\end{itemize}
In both cases, there exists $u\in(a,b)$ and a homeomorphism $h:(a,b)\to (a,b)$ with $h(u)=u,$ $h(a,u]=[u,b)$ such that 
$d(h(x)) = d(x)$ for all $x\in(a,b).$ 
\end{proposition}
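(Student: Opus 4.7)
The strategy is to read off the structure of $\mu$ from the boundary behavior of $F_\mu$ via Carath\'{e}odory's theorem and the Stieltjes--Perron inversion formula. Writing $s$ for the base of $\gamma$ ($s = 0$ in (a), $s \ne 0$ in (b)), both $\Ha$ and $\Ha \setminus \gamma$ are simply connected Jordan domains on the Riemann sphere after adjoining $\infty$, so $F_\mu$ extends to a homeomorphism of the prime end closures. Traversing $\partial(\Ha \setminus \gamma)$ in the positive sense one passes the part of $\R$ to the left of $s$, the left prime end at $s$, the left side of $\gamma$, the tip (one prime end), the right side of $\gamma$, the right prime end at $s$, the part of $\R$ to the right of $s$, and $\infty$. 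The normalization $G_\mu(z) = 1/z + \LandauO(1/z^2)$ gives $F_\mu(z) = z + \Landauo(1)$ at infinity, so $\infty$ is a fixed point of the extension; hence there exist real numbers $a < u < b$ such that $F_\mu(a) = F_\mu(b) = s$, $F_\mu(u)$ is the tip of $\gamma$, the restrictions $F_\mu : (-\infty, a) \to (-\infty, s)$ and $F_\mu : (b, \infty) \to (s, \infty)$ are monotone homeomorphisms, and $(a, u)$, $(u, b)$ map homeomorphically onto the left and right sides of $\gamma$.

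Next I would apply the Stieltjes--Perron formula to $G_\mu = 1/F_\mu$. On $(a, b)$, $F_\mu$ takes values in the interior of $\gamma$ together with the tip, a subset of $\Ha$, so $\Im F_\mu > 0$ and $|F_\mu| > 0$ throughout and
\[
d(x) \;:=\; \frac{1}{\pi}\,\frac{\Im F_\mu(x)}{|F_\mu(x)|^2}
\]
is a positive continuous density there. Off $[a, b]$, $F_\mu$ is real-valued, so $\Im G_\mu = 0$ and no absolutely continuous mass appears. Atoms correspond to simple real zeros of $F_\mu$ with positive derivative (via Schwarz reflection). Locally at $a$ (similarly at $b$) the map sends a half-disk in $\Ha$ conformally onto a sector of opening $\alpha \in (0, \pi)$ at $s$, giving $F_\mu(z) - s \sim c(z - a)^{\alpha/\pi}$ with exponent strictly less than $1$; thus when $s = 0$ the resulting singularity of $G_\mu$ is weaker than a simple pole and carries no point mass. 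In case (a), monotonicity of $F_\mu$ on $(-\infty, a)$ and on $(b, \infty)$ forbids any further real zero, so $\supp \mu = [a, b]$. In case (b), $s \ne 0$, so $0$ lies in exactly one of the real intervals $(-\infty, s)$ or $(s, \infty)$, and $F_\mu$ has a unique zero $x_0$ on $\R \setminus [a, b]$; since $0$ is a regular boundary point of $\Ha \setminus \gamma$, Schwarz reflection makes $F_\mu$ analytic at $x_0$ with $F_\mu'(x_0) > 0$, producing a simple pole of $G_\mu$ and an atom at $x_0$ of mass $1/F_\mu'(x_0)$. Hence $\supp \mu = \{x_0\} \cup [a, b]$ with $x_0 \notin [a, b]$.

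For the symmetry, each interior point of $\gamma$ carries two prime ends (one per side) which $F_\mu^{-1}$ sends to distinct points in $(a, u)$ and $(u, b)$ having the same complex image under $F_\mu$. Define $h : (a, b) \to (a, b)$ by $h(u) := u$ and, for $x \in (a, b) \setminus \{u\}$, by the rule $F_\mu(h(x)) = F_\mu(x)$, $h(x) \ne x$; equivalently, $h|_{(a, u]} = \bigl(F_\mu|_{[u, b)}\bigr)^{-1} \circ F_\mu|_{(a, u]}$, which is a homeomorphism onto $[u, b)$. Thus $h$ is an involutive self-homeomorphism of $(a, b)$ with $h(u) = u$ and $h((a, u]) = [u, b)$, and $F_\mu(h(x)) = F_\mu(x)$ immediately gives $d(h(x)) = d(x)$ from the formula above. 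The main technical point is the prime end analysis at the base and tip of $\gamma$; the rest is bookkeeping with the Stieltjes--Perron inversion formula.
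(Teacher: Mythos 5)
Your proof is correct and follows essentially the same route as the paper: continuous boundary extension of $F_\mu$, identification of the preimage interval $[a,b]$ of the slit with a unique tip preimage $u\in(a,b)$, Stieltjes--Perron inversion to get the density on $(a,b)$ and the atom at the unique real zero $x_0$ of $F_\mu$ when $s\ne0$, and the welding homeomorphism $h$ from the two-sided boundary correspondence. Two small inaccuracies, neither fatal: $\Ha\setminus\gamma$ (even after adjoining $\infty$) is \emph{not} a Jordan domain---its boundary is not a simple closed curve, which is precisely why one invokes prime ends or (as the paper does) local connectivity of the boundary to get the continuous extension; and the local expansion $F_\mu(z)-s\sim c(z-a)^{\alpha/\pi}$ with $\alpha\in(0,\pi)$ is not guaranteed for an arbitrary slit (a slit may meet $\R$ tangentially or with no well-defined angle), but this step is superfluous anyway since part (a) only claims a density on the \emph{open} interval $(a,b)$ and your monotonicity argument already gives $\supp\mu=[a,b]$ without it.
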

\begin{proof}
As the domain $\Ha\setminus \gamma$ has a locally connected boundary, the mapping $F_\mu$ can be extended continuously to $\overline{\Ha};$ see \cite[Theorem 2.1]{MR1217706}.\\
There exists an interval $[a,b]$ such that $f([a,b])=\gamma$ and there is a unique $u\in (a,b)$ such that $F_\mu(u)$ is the tip of the slit. All points $[a,u]$ correspond to the left side, all points $[u,b]$ to the right side of $\gamma.$ (This orientation follows from the behaviour of $F_\mu(x)$ as  $x\to\pm\infty.$) Hence, there exists a unique homeomorphism $h:(a,b)\to(a,b)$ with $h(u)=u,$ $h(a,u]=[u,b)$ such that $F_\mu(h(x))=F_\mu(x)$ for all $x\in(a,b).$\\
 It follows from \cite[Theorem F.6]{MR2953553} that $\mu$ is absolutely continuous on $(a,b)$ and the density $d(x)$ satisfies $d(x) = \frac{-1}{\pi}\Im(1/F_\mu)(x).$ Hence, $d(h(x)) = d(x)$ for all $x\in(a,b).$\\
 If $0$ is not the starting point of the slit, $\mu$ is absolutely continuous on $[a,b]$ as $G_\mu(a), G_\mu(b)\not=\infty$. 
 Furthermore, $F_\mu$ has exactly one zero $x_0\in\R\setminus[a,b]$ in this case. Hence, 
 $1/F_\mu$ has a pole at $z=x_0$ and we conclude that $\mu$ has an atom at $x_0$ (see \cite[Theorem F.2]{MR2953553}). 
 As $\Im(1/F_\mu)(x)=0$ for all $x\in\R\setminus(\{x_0\}\cup[a,b]\},$ we conclude that $\supp \mu = \{x_0\}\cup[a,b]$, 
 again by using \cite[Theorem F.6]{MR2953553}.\\
 If $0$ is the starting point of $\gamma$, then $\supp \mu = [a,b]$. 
\end{proof}

\begin{remark}
A slit $\gamma$ is called \emph{quasislit} if $\gamma$ approaches $\R$ nontangentially and $\gamma$ is the image of a line segment under a quasiconformal mapping.

The theory of conformal welding implies: $\gamma$ is a quasislit if and only if $h$ is quasisymmetric; see \cite[Lemma 6]{Lind:2005} and \cite[Lemma 2.2]{MarshallRohde:2005}.
\end{remark}

 Let $(\sigma_{s,t})_{0\leq s\leq t}$ be a (free/monotone/anti-monotone) evolution family of compactly supported probability measures. Of course, one is interested in realizations of such a family of distributions as a process on a $C^*$--algebra.

\begin{definition}
 A \emph{realization} of $(\sigma_{s,t})_{0\leq s \leq t}$ is a $C^*$--algebra $(\mathcal{A}, \varphi)$ with a collection $(X_{t})_{0 \leq t}\subset \mathcal{A}$ of self-adjoint random variables such that 
\begin{itemize}
\item[(a)] $X_0=0$,
\item[(b)] the distribution of $X_{t}-X_s$ is given by $\sigma_{s,t}$  for all $0\leq s \leq t,$
\item[(c)] the increments $(X_{t_{2}}-X_{t_{1}}, ..., X_{t_n}-X_{t_{n-1}})$ are independent for all $0\leq  t_1\leq t_2\leq ... \leq t_n.$
\end{itemize}
\end{definition}
One can require also further regularity conditions for the map $t\mapsto X_t$.\\

A realization of the free semigroup $(\sigma_{s,t})_{0\leq s\leq t} = \mu_{W,t-s}$ is called a \emph{free Brownian motion}. Similarly, a realization of the monotone semigroup $(\sigma_{s,t})_{0\leq s\leq t} = \mu_{A,t-s}$ is called a \emph{monotone Brownian motion},  which corresponds to Example \ref{line_segment}.\\
In general, one can switch between non-commutative and classical evolution families of probability measures by using the L\'{e}vy--Khintchine  representation formulas; see \cite[Remark 5.17, Theorem 4.14]{MR2213451} and \cite[Theorem 2.2]{MR2230621}.\\

A free Brownian motion can be realized on the free Fock space (see \cite{MR1030725})

$$F(L^2(\R)) = \Omega \C \oplus \bigoplus_{n=1}^\infty L^2(\R^n),$$ 
where $\Omega\in L^2(\R)$ has norm 1.\\ Now one takes $\mathcal{A}=\mathcal{B}(F(L^2(\R)))$, which is the space of all bounded linear operators on $F(L^2(\R)) ,$ and  $\varphi:\mathcal{A}\to\C$, $\varphi(a) = \left< a \Omega, \Omega\right>.$ \\

In a similar way, one can realize a monotone Brownian motion on the monotone Fock 
space; see \cite{MR1462227}. Realizations can also be described by 
``quantum stochastic differential equations''. 
We refer to \cite[Theorem 4.1]{MR2230621} and \cite[p.246]{MR2213451} for the monotone and \cite[Section 5.4 on p.121 and Section 6 on p.123]{MR2213451} for the free case.

\begin{question} Is it possible to realize the (classically) random monotone/anti-monotone evolution family of SLE (i.e. $\nu_t=\delta_{\sqrt{\kappa/2}B_t}$)? 
\end{question}

%\bibliographystyle{ws-idaqp}
%\bibliography{sample}

\begin{thebibliography}{10}

\bibitem{AbateBracci:2010}
M.~Abate, F.~Bracci, M.~D. Contreras and S.~D{\'{\i}}az-Madrigal, The evolution
  of {L}oewner's differential equations, {\em Eur. Math. Soc. Newsl.} (78)
  (2010)  31--38.

\bibitem{MR3129803}
M.~Anshelevich, Generators of some non-commutative stochastic processes, {\em
  Probab. Theory Related Fields} {\bf 157}(3-4)  (2013)  777--815.

\bibitem{MR2132092}
D.~Applebaum, B.~V.~R. Bhat, J.~Kustermans and J.~M. Lindsay, {\em Quantum
  independent increment processes. {I}}, Lecture Notes in Mathematics,
  Vol.~1865 (Springer-Verlag, Berlin, 2005).

\bibitem{MR2213451}
O.~E. Barndorff-Nielsen, U.~Franz, R.~Gohm, B.~K{\"u}mmerer and
  S.~Thorbj{\o}rnsen, {\em Quantum independent increment processes. {II}},
  Lecture Notes in Mathematics, Vol.~1866 (Springer-Verlag, Berlin, 2006).

\bibitem{MR2053711}
R.~O. Bauer, L\"owner's equation from a noncommutative probability perspective,
  {\em J. Theoret. Probab.} {\bf 17}(2)  (2004)  435--456.

\bibitem{MR2107849}
R.~O. Bauer, Chordal {L}oewner families and univalent {C}auchy transforms, {\em
  J. Math. Anal. Appl.} {\bf 302}(2)  (2005)  484--501.

 \bibitem{ber}
H.~Bercovici, Multiplicative monotonic convolution, {\em Illinois J. Math.} {\bf 49}(3) (2005) 929-951.


\bibitem{MR1709310}
H.~Bercovici and V.~Pata, Stable laws and domains of attraction in free
  probability theory, {\em Ann. of Math. (2)} {\bf 149}(3)  (1999)  1023--1060,
  With an appendix by Philippe Biane.

\bibitem{MR1254116}
H.~Bercovici and D.~Voiculescu, Free convolution of measures with unbounded support, {\em Indiana Univ. Math. J.} {\bf 42}(3)  (1993)
 733--773.

\bibitem{MR1605393}
P. Biane, Processes with free increments, {\em Math. Z.} {\bf 227}(1) (1998) 143--174.

\bibitem{MR3184323}
M.~D. Contreras, S.~D{\'{\i}}az-Madrigal and P.~Gumenyuk, Local duality in
  {L}oewner equations, {\em J. Nonlinear Convex Anal.} {\bf 15}(2)  (2014)
  269--297.

\bibitem{MR1483010}
M.~De.~Giosa and Y.~G. Lu, The free creation and annihilation operators as the
  central limit of the quantum {B}ernoulli process, {\em Random Oper.
  Stochastic Equations} {\bf 5}(3)  (1997)  227--236.

\bibitem{MR1974583}
U.~Franz, Unification of {B}oolean, monotone, anti-monotone, and tensor
  independence and {L}\'evy processes, {\em Math. Z.} {\bf 243}(4)  (2003)
  779--816.

%\bibitem{MR2423123}
%U.~Franz, Multiplicative monotone convolutions, {\em Quantum probability\/},
  %{\em Banach Center Publ.} {\bf 73} (Polish Acad. Sci., Warsaw, 2006), pp.
  %153--166.

\bibitem{MR2230621}
U.~Franz and N.~Muraki, Markov property of monotone {L}\'evy processes, {\em
  Infinite dimensional harmonic analysis {III}\/},  (World Sci. Publ.,
  Hackensack, NJ, 2005), pp. 37--57.

\bibitem{MR1201130}
V.~V. Goryainov and I.~Ba, Semigroup of conformal mappings of the upper
  half-plane into itself with hydrodynamic normalization at infinity, {\em
  Ukrain. Mat. Zh.} {\bf 44}(10)  (1992)  1320--1329.

\bibitem{GM13}
P.~Gumenyuk and A.~del Monaco, Chordal {L}oewner equation, {\em eprint
  arxiv:1302.0898v2}   (2013).

\bibitem{MR2725900}
T.~Hasebe, Monotone convolution semigroups, {\em Studia Math.} {\bf 200}(2)
  (2010)  175--199.

\bibitem{hasebe2}
T.~Hasebe, The monotone cumulants, {\em Ann. Inst. H. Poincar\'{e} Probab. Statist.} {\bf 47}(4)
  (2011)  1160--1170.

\bibitem{MR2348786}
T.~Kennedy, A fast algorithm for simulating the chordal {S}chramm-{L}oewner
  evolution, {\em J. Stat. Phys.} {\bf 128}(5)  (2007)  1125--1137.

\bibitem{Lawler:2005}
G.~F. Lawler, {\em Conformally invariant processes in the plane}, Mathematical
  Surveys and Monographs,, Vol.~114 (American Mathematical Society, Providence,
  RI, 2005).

\bibitem{MR2321046}
R.~Lenczewski, Decompositions of the free additive convolution, {\em J. Funct.
  Anal.} {\bf 246}(2)  (2007)  330--365.

\bibitem{LindMR:2010}
J.~Lind, D.~E. Marshall and S.~Rohde, Collisions and spirals of {L}oewner
  traces, {\em Duke Math. J.} {\bf 154}(3)  (2010)  527--573.

\bibitem{Lind:2005}
J.~R. Lind, A sharp condition for the {L}oewner equation to generate slits,
  {\em Ann. Acad. Sci. Fenn. Math.} {\bf 30}(1)  (2005)  143--158.

\bibitem{MR1512136}
K.~L{\"o}wner, Untersuchungen \"uber schlichte konforme {A}bbildungen des
  {E}inheitskreises. {I}, {\em Math. Ann.} {\bf 89}(1-2)  (1923)  103--121.

\bibitem{MR1455615}
Y.~G. Lu, An interacting free {F}ock space and the arcsine law, {\em Probab.
  Math. Statist.} {\bf 17}(1, Acta Univ. Wratislav. No. 1928)  (1997)
  149--166.

\bibitem{MR1165862}
H.~Maassen, Addition of freely independent random variables, {\em J. Funct.
  Anal.} {\bf 106}(2)  (1992)  409--438.

\bibitem{MarshallRohde:2005}
D.~E. Marshall and S.~Rohde, The {L}oewner differential equation and slit
  mappings, {\em J. Amer. Math. Soc.} {\bf 18}(4)  (2005)  763--778.

\bibitem{MR1467953}
N.~Muraki, A new example of noncommutative ``de {M}oivre-{L}aplace theorem'',
  {\em Probability theory and mathematical statistics ({T}okyo, 1995)\/},
  (World Sci. Publ., River Edge, NJ, 1996), pp. 353--362.

\bibitem{MR1462227}
N.~Muraki, Noncommutative {B}rownian motion in monotone {F}ock space, {\em
  Comm. Math. Phys.} {\bf 183}(3)  (1997)  557--570.

\bibitem{MR1824472}
N.~Muraki, Monotonic independence, monotonic central limit theorem and
  monotonic law of small numbers, {\em Infin. Dimens. Anal. Quantum Probab.
  Relat. Top.} {\bf 4}(1)  (2001)  39--58.

\bibitem{MR1853184}
N.~Muraki, Towards ``monotonic probability'', {\em S\=urikaisekikenky\=usho
  K\=oky\=uroku} (1186)  (2001)  28--35, Topics in information sciences and
  applied functional analysis (Japanese) (Kyoto, 2000).

\bibitem{MR2016316}
N.~Muraki, The five independences as natural products, {\em Infin. Dimens.
  Anal. Quantum Probab. Relat. Top.} {\bf 6}(3)  (2003)  337--371.

\bibitem{Novak_lectures}
J.~Novak, Three lectures on free probability theory, {\em eprint
  arXiv:1205.2097}   (2012).

\bibitem{MR1217706}
C.~Pommerenke, {\em Boundary behaviour of conformal maps}, Grundlehren der
  Mathematischen Wissenschaften [Fundamental Principles of Mathematical
  Sciences],, Vol.~299 (Springer-Verlag, Berlin, 1992).

\bibitem{MR3477781}
S.~Rohde and D.~Zhan, Backward {SLE} and the symmetry of the welding, {\em
  Probab. Theory Related Fields} {\bf 164}(3-4)  (2016)  815--863.

\bibitem{MR1211641}
J.~L. Schiff, {\em Normal families}, Universitext (Springer-Verlag,
  New York, 1993).

\bibitem{MR2953553}
K.~Schm{\"u}dgen, {\em Unbounded self-adjoint operators on {H}ilbert space},
  Graduate Texts in Mathematics, Vol.~265 (Springer, Dordrecht, 2012).

\bibitem{MR1030725}
R.~Speicher, A new example of ``independence'' and ``white noise'', {\em
  Probab. Theory Related Fields} {\bf 84}(2)  (1990)  141--159.

\bibitem{MR3409694}
H.~Tran, Convergence of an algorithm simulating {L}oewner curves, {\em Ann.
  Acad. Sci. Fenn. Math.} {\bf 40}(2)  (2015)  601--616.

\bibitem{MR799593}
D.~Voiculescu, Symmetries of some reduced free product {$C^\ast$}-algebras,
  {\em Operator algebras and their connections with topology and ergodic theory
  ({B}u\c steni, 1983)\/},  {\em Lecture Notes in Math.} {\bf 1132} (Springer,
  Berlin, 1985), pp. 556--588.

\bibitem{MR1228526}
D.~Voiculescu, The analogues of entropy and of {F}isher's information measure
  in free probability theory. {I}, {\em Comm. Math. Phys.} {\bf 155}(1)  (1993)
   71--92.

\end{thebibliography}

\def\cprime{$'$}

%\begin{thebibliography}{0}
%\bibitem{1} N. Bellomo and L. M. de Socio, Initial boundary value
%problems for the semidiscrete Boltzmann equation,
%{\small\it J. Math. Anal. Appl.} {\small\bf 128}(3) (1987) 112--124.
%
%\bibitem{2} H. Cabanes, Global solution of the discrete Boltzmann
%equation, in {\small\it Mathematical Problems in the Kinetic
%Theory of Gases}, eds. H. Neunzert and D. Pack (Lang Verlag,
%1979), pp. 22--45.
%
%\bibitem{3}
%C. Lee, S. Lee, J. Kim, S.-J. Kim, D. Zhang and A. Jain,
%Preprocessing of a fingerprint image captured with a mobile camera,
%in {\it Advances in Biometrics}, Vol. 3832 (Springer, Berlin-Heidelberg, 2005),
%pp. 348--355.
%
%\bibitem{4} E. Longo and N. Bellomo, On the semidiscrete Boltzmann
%equation with multiple collisions, in {\small\it Symposium:
%Progress en Mecanique}, Lect. Notes in Phys., eds. R.
%Gatignol and Soubbaramayer, to appear.
%
%\bibitem{5} P. Resibois and M. de Leener, {\small\it The
%Mathematical Theory of the Kinetic Theory of Fluids} (Wiley, 1977).
%
%\bibitem{6} C. J. van der Hoek, Contamination of a well
%in a uniform background flow, Technical Note 90--03, Dept. of Math., Agricult.
%Univ. Wageningen (1990).
%\end{thebibliography}
\end{document}